\documentclass[12pt]{article}

\usepackage{amsmath, amssymb, amsthm, amsfonts}
\usepackage{color}
\usepackage{comment}
\usepackage{hyperref}
\usepackage{epsfig}
\usepackage{graphicx} 
\usepackage{latexsym,mathrsfs,url,bm}
\usepackage{xcolor}

\newcommand{\todo}[1]{{\color{red}{#1}}}
\newcommand{\cp}{{\rm cap}}

\newtheorem{theorem}{Theorem}
\newtheorem{lemma}{Lemma}

\newtheorem{definition}{Definition}

\newtheorem{remark}{Remark}

\title{Spherical Cap $L_2$ Discrepancy -- Blessing of Dimensionality and a Balanced Large-Cap Variant}
\author{Johann S. Brauchart\thanks{Institut für Analysis und Zahlentheorie, TU Graz, Kopernikusgasse 24/II, 8010 Graz, Austria (\url{j.brauchart@tugraz.at}). The research of J.~S.~B. was funded in whole or in part by the OeAD (Austria's Agency for Education and Internationalisation) Project No. BG 07/2025. }, Josef Dick\thanks{School of Mathematics and Statistics, The University of New South Wales Sydney, 2052 NSW, Australia (\url{josef.dick@unsw.edu.au}). The work of J.~D.\ is supported by ARC grant DP220101811.}, Friedrich Pillichshammer\thanks{Institut für Finanzmathematik und Angewandte Zahlentheorie, Johannes Kepler Universität Linz, Altenbergerstra{\ss}e 69, 4040 Linz, Austria (\url{friedrich.pillichshammer@jku.at})}}
\date{}

\begin{document}

\maketitle

\begin{abstract}
We prove that the information complexity (i.e., the inverse) of the classical spherical cap $L_2$ discrepancy on the $d$-dimensional sphere $\mathbb{S}^d$ decreases with dimension $d$, indicating a ``blessing of dimensionality'' for the associated numerical integration problem. We then introduce a modified spherical cap $L_2$ discrepancy that emphasizes large caps (close to hemispheres). For this variant, the problem does not become easier with increasing $d$. We also establish a Stolarsky invariance principle which connects the modified spherical cap $L_2$ discrepancy to numerical integration in the Sobolev space $H^{(d+1)/2}(\mathbb{S}^d)$, represented by the reproducing kernel $K(\bm{x}, \bm{y}) = 1 - \tfrac{1}{\sqrt{2}} \|\bm{x} - \bm{y}\|$. Stolarsky's invariance principle then implies that the worst-case integration error in this space grows polynomially with $d$. 
\end{abstract}

\centerline{\begin{minipage}[hc]{130mm}{
{\em Keywords:} Spherical cap discrepancy, point distribution on the sphere, information-based-complexity, quasi-Monte Carlo\\
{\em MSC 2010:} 11K38, 65D30, 65C05, 65Y20}
\end{minipage}}

\section{Introduction}

Discrepancy on the sphere quantifies how well a finite point set reproduces the uniform surface measure when tested against indicator functions of spherical caps. Among the various notions of spherical discrepancy, the \emph{spherical cap $L_2$ discrepancy} occupies a central position: it is rotationally invariant, admits a reproducing-kernel representation, and is tied -- via Stolarsky's invariance principle -- to geometric energies on the sphere \cite{BD13a}. In numerical integration on $\mathbb{S}^d$, this discrepancy governs worst-case errors for an associated reproducing kernel Hilbert space (RKHS) of functions, thus providing an information-complexity lens on quadrature over $\mathbb{S}^d$.

This paper shows that, for the \emph{classical} spherical cap $L_2$ discrepancy on $\mathbb{S}^d$, the inverse discrepancy (equivalently, the information complexity) \emph{improves} as the ambient dimension $d$ grows: fewer points suffice to reach the same target accuracy -- the \emph{blessing of dimensionality}. This is the mirror image of the familiar ``curse of dimensionality'' in high-dimensional numerical analysis. We also show that this improvement is not a structural necessity of spherical cap-based testing. By restricting spherical caps to caps that are close to hemispheres (where ``close'' depends on the dimension $d$), we obtain a spherical cap-based discrepancy whose information complexity does \emph{not} become easier with increasing dimension $d$. We obtain a Stolarsky invariance principle for this discrepancy measure and also connect it to numerical integration on the sphere $\mathbb{S}^d$. 

\paragraph{Setting and notation.}
Let $\mathbb{S}^d:=\{ \bm{x} \in\mathbb{R}^{d+1}:\| \bm{x} \|_2=1\}$ be equipped with the normalized surface area measure $\sigma_d$, so $\sigma_d(\mathbb{S}^d)=1$, where $\| \bm{\cdot} \|_2 := \sqrt{\langle \bm{\cdot} , \bm{\cdot} \rangle}$ is the norm induced by the usual inner product in $\mathbb{R}^{d+1}$. 
For $\bm{x} \in\mathbb{S}^d$ and $t\in[-1,1]$ let 
\[
C(\bm{x}; t) \;:=\;\{\bm{y} \in\mathbb{S}^d:\langle \bm{x}, \bm{y} \rangle\ge t\}
\]
denote the spherical cap with center $\bm{x}$ and height $t$.

We study distribution properties of $N$-point sets $P_{N,d} :=\{\bm{x}_1,\ldots,\bm{x}_N\}$ on the sphere~$\mathbb{S}^d$. Throughout, the notation $P_{N,d}$ indicates that we speak about an $N$-element set in $\mathbb{S}^d$. A prominent quantitative measure for the uniformity of such a point set is the spherical cap $L_2$ discrepancy. In the following, let $\bm{1}_{C(\bm{x}; t)}$ denote the indicator function of a spherical cap $C(\bm{x}; t)$.

\begin{definition}\label{def1}\rm
The spherical cap $L_2$ discrepancy of an $N$-point set $P_{N,d} = \{ \bm{x}_1, \ldots, \bm{x}_N\} \subseteq \mathbb{S}^d$ is given by
\begin{equation*}
L_2^{\cp}(P_{N,d}):= \left( \int_{-1}^1 \int_{\mathbb{S}^d} \left( \frac{1}{N} \sum_{n=1}^N \bm{1}_{C(\bm{x}; t)}(\bm{x}_n) - \sigma_d(C(\bm{x}; t)) \right)^2 \,\mathrm{d} \sigma_d(\bm{x}) \,\mathrm{d} t \right)^{1/2}.
\end{equation*}
\end{definition}
As reference value we define the initial spherical cap $L_2$ discrepancy by
\begin{equation*}
L_2^{\cp}(\emptyset) := \left(\int_{-1}^1 \int_{\mathbb{S}^d} \left( \sigma_d(C(\bm{x}; t)) \right)^2 \,\mathrm{d} \sigma_d(\bm{x}) \,\mathrm{d} t\right)^{1/2}.
\end{equation*}

In addition to its geometrical interpretation, the spherical cap $L_2$ discrepancy is also intimately related to the worst-case error of quasi-Monte Carlo rules for the numerical integration of functions over $\mathbb{S}^d$ with respect to $\sigma_d$, see \cite{BD13a,BSSW14,DP14a}.

\paragraph{Information complexity.} In this paper we are interested in information-based complexity, where one typically examines the following inverse problem: \emph{determine how much information is required to reduce the initial discrepancy to an $\varepsilon$-fraction of its original value for $\varepsilon \in (0,1)$.} The corresponding measure is the information complexity, also known in the discrepancy setting as the inverse discrepancy.

\begin{definition}\rm
For $\varepsilon \in (0,1)$ and $d \in \mathbb{N}$ the inverse of the spherical cap $L_2$ discrepancy is defined as 
\begin{equation*}
N^{\cp}(\varepsilon, d) := \inf \{ N \in \mathbb{N}: \exists P_{N,d} \subseteq \mathbb{S}^d \ \text{such that}\ L_2^{\cp}(P_{N,d}) \le \varepsilon L_2^{\cp}(\emptyset) \}.
\end{equation*}
\end{definition}
Equivalently, $N^{\cp}(\varepsilon,d)$ is the minimal number of function evaluations needed by an equal-weight quadrature rule to guarantee worst-case error at most $\varepsilon$ in the unit ball of the corresponding function class. We stress that our upper and lower bounds are dimension-explicit, enabling a direct comparison of $N^{\cp}(\varepsilon,d)$ across $d$.

\paragraph{Main contributions.} The main contributions of the present work are as follows:
\begin{itemize}
  \item \textbf{Blessing of dimensionality for the classical spherical cap $L_2$ discrepancy.} We prove that, for fixed $\varepsilon\in(0,1)$, the information complexity $N^{\cp}(\varepsilon,d)$ for the classical spherical cap $L_{2}$ discrepancy is \emph{decreasing} in $d$. Thus, the associated worst-case numerical integration problem becomes easier as $d$ increases. One may call this unusual behavior the \emph{blessing of dimensionality}.

  Although it is positive from a computational effort point of view if a problem becomes easier to solve with increasing dimension, it does beg the question whether the definition of the problem in high dimensions is appropriate in the first place. We address this in the second part.
 
  \item \textbf{A large spherical cap discrepancy with stabilized difficulty.} We introduce a natural modification of the spherical cap-based $L_2$ discrepancy and its associated reproducing kernel such that the corresponding information complexity does \emph{not} exhibit a blessing of dimensionality. In this setting, $N^{\cp}(\varepsilon,d)$ does not decrease with dimension $d$. 
\end{itemize}

Besides spherical cap $L_2$ discrepancy, there are numerous other figures-of-merit for the quality of point distributions on the sphere. For at least one of these criteria, a type of diaphony, its dependence on dimension has been investigated. See, in particular, \cite[Section~3.4]{ChS13}.

\paragraph{Why the blessing of dimensionality arises.}
At a high level, our analysis isolates the following phenomenon. As $d$ grows, spherical caps undergo a concentration-of-measure effect: for a random cap anchored at an arbitrary center point $\bm{x}$, its surface area measure concentrates around the equator (relative to $\bm{x}$) at a rate controlled by $d$. More precisely, for fixed $t \in (0,1)$ and any $\bm{x} \in \mathbb{S}^d$ we have (see Section~\ref{sec_dist_int})

\begin{equation}\label{asym:Cxt}
\sigma_d(C(\bm{x}; t)) \sim \frac{1}{\sqrt{2\pi d}} \, \frac{(1-t^2)^{d/2}}{t} = \frac{1}{\sqrt{2\pi d}} \, \frac{o\big( \mathrm{e}^{- d t^2 / 2} \big)}{t} \qquad \text{as $d \to \infty$.}
\end{equation}
This means that the measure of spherical caps decreases exponentially fast with increasing dimension when $t \in (0,1)$ remains fixed (or, more generally, $d t^2$ tends to infinity sufficiently fast). However, for $t = 0$ (which corresponds to a hemisphere), the measure is $1/2$ and for $t$ close enough to $0$ (depending on $d$) the measure is also close to $1/2$. Since the $L_2$ spherical cap discrepancy takes the average of the square of the local discrepancy function over $t$ from $-1$ to $1$, for increasing dimension $d$, more and more weight is given to spherical caps which are very small in surface measure. In the limit, as $d \to \infty$, the discrepancy measure of a point set becomes trivial (only spherical caps close to the hemisphere ($t=0$) matter in this case). In order to avoid that problem for large dimensions, we propose to restrict $t$ to the interval $[-\alpha_d, \alpha_d]$ for an appropriately chosen $\alpha_d \in [0,1]$ depending on the dimension such that spherical caps which are ``too small'' in volume get ignored. 

\paragraph{The large spherical cap $L_2$ discrepancy.}

In the following, we introduce a parameter in the definition of the spherical cap $L_2$ discrepancy which will allow us to adjust the behavior of the inverse of the $L_2$ spherical cap discrepancy as the dimension increases (see the discussion in Section~\ref{sec:tract}).

\newpage
\begin{definition}\rm
For $d \in \mathbb{N}$ let $\alpha_d \in [0, 1]$. The $\alpha_d$-large spherical cap $L_2$ discrepancy of an $N$-point set $P_{N,d} =\{ \bm{x}_1, \ldots, \bm{x}_N\} \subseteq \mathbb{S}^d$ is given by
\begin{equation*}
L_2^{\cp,a_d}(P_{N,d}) :=\left(\int_{-1}^1 \int_{\mathbb{S}^d} \left|\frac{1}{N} \sum_{n=1}^N \bm{1}_{C(\bm{x}; \alpha_d \, t)}(\bm{x}_n) -\sigma_d(C(\bm{x}; \alpha_d \, t))\right|^2 \,\mathrm{d} \sigma_d(\bm{x}) \,\mathrm{d} t\right)^{1/2}.
\end{equation*}
\end{definition}

Choosing $\alpha_d=1$ yields Definition~\ref{def1}. For $-1 \le t \le 1$ we have $-\alpha_d \le t\alpha_d \le \alpha_d$. 
For small $\alpha_d$ and in particular when $\alpha_d \to 0$ as $d \to \infty$ the large spherical cap $L_2$ discrepancy uses spherical caps that are close to a hemisphere.
We discuss the extreme case $\alpha_d = 0$ separately in Section~\ref{sec_hemisphere}.
An interesting special case is where $\alpha_d = \sqrt{2} \, C_d$, in which case we obtain polynomial tractability of the $\alpha_d$-large spherical cap $L_2$ discrepancy and a reasonable normalization of the corresponding integration problem, see Section~\ref{sec:tract}. 

We note that, in principle, one could introduce a function $s:[-1,1] \to [-1,1]$ and consider caps $C(\bm{x}; s(t))$ to avoid the blessing of dimensionality. In our case, we only consider the special case $s(t) = \alpha_d \, t$. The reason for our particular choice is that with $s(t) = \alpha_d \, t$ we can still obtain a Stolarsky invariance principle and can connect the $\alpha_d$-large spherical cap $L_2$ discrepancy to numerical integration on the sphere $\mathbb{S}^d$. Though other choices of functions $s$ may turn out to be useful in the future, a version of Stolarsky's invariance principle and its connection to numerical integration on the sphere for general functions $s$ does not appear to be obvious.

We discuss appropriate choices for $\alpha_d$ in the ``Discussion'' section of Section~\ref{sec:tract}. There we identify choices of $\alpha_d$ such that the inverse of the large spherical cap $L_2$ discrepancy problem does not get easier with increasing dimension $d$.

\paragraph{Stolarsky's invariance principle and numerical integration.}

We return to the classical setting. 
In \cite{BD13a} the first two authors show that for the Sobolev space $H^{(d+1)/2}$ over the sphere $\mathbb{S}^d$ provided with the reproducing kernel
\begin{equation*}
K_{1,d}(\bm{x}, \bm{y}) := 1 - C_d \|\bm{x} - \bm{y}\|, \qquad \bm{x}, \bm{y} \in \mathbb{S}^d, 
\end{equation*}
where (see \cite[Eq. (2.4)]{BD13a})
\begin{equation}\label{def:Cd}
C_d := \frac{1}{d} \frac{\Gamma(\tfrac{d+1}{2})}{\sqrt{\pi} \ \Gamma(\tfrac{d}{2})} \sim \frac{1}{\sqrt{2 \pi d}} \quad \mbox{as } d \to \infty
\end{equation}
expressed in terms of the Gamma function $\Gamma(s) := \int_0^{\infty} t^{s-1} {\rm e}^{-t} \, \mathrm{d} t$, $\Re s > 0$, the spherical cap $L_2$ discrepancy of an $N$-point set $P_{N,d}$ and the worst-case numerical integration error of an equal weight numerical integration rule taking the average of function values at the nodes in $P_{N,d}$ for functions in $H^{(d+1)/2}$ have the same value. 
Furthermore, the squared spherical cap $L_2$ discrepancy can be computed either using Stolarsky's invariance principle 
\begin{equation}
\label{eq:Stolarsky.principle}
(L_2^{\cp}(P_{N,d}))^2 = C_d  \int_{\mathbb{S}^d} \int_{\mathbb{S}^d} \|\bm{x} - \bm{y}\| \,\mathrm{d} \sigma_d(\bm{x}) \,\mathrm{d}\sigma_d(\bm{y}) - C_d \frac{1}{N^2} \sum_{m,n=1}^N \|\bm{x}_m -\bm{x}_n\|    
\end{equation}
or the well-known reproducing kernel Hilbert space representation of the squared  worst case error
\begin{align} \label{L2_kernel}
\begin{split}
(L_2^{\cp}(P_{N,d}))^2
&=
\int_{\mathbb{S}^d} \int_{\mathbb{S}^d} K_{1,d}(\bm{x}, \bm{y}) \,\mathrm{d} \sigma_d(\bm{x}) \,\mathrm{d}\sigma_d(\bm{y}) 
-
\frac{2}{N} \sum_{n=1}^N \int_{\mathbb{S}^d} K_{1,d}(\bm{x}, \bm{x}_n) \,\mathrm{d} \sigma_d(\bm{x}) 
\\
&\phantom{=}+
\frac{1}{N^2} \sum_{m,n=1}^N K_{1,d}(\bm{x}_m, \bm{x}_n).
\end{split} 
\\
&=
\frac{1}{N^2} \sum_{m,n=1}^N K_{1,d}(\bm{x}_m, \bm{x}_n) - \int_{\mathbb{S}^d} \int_{\mathbb{S}^d} K_{1,d}(\bm{x}, \bm{y}) \,\mathrm{d} \sigma_d(\bm{x}) \,\mathrm{d}\sigma_d(\bm{y}). \notag
\end{align}
Formula \eqref{L2_kernel} provides a way to compute the initial spherical cap $L_2$ discrepancy by observing that for an empty point set the last two sums in \eqref{L2_kernel} are not present and thus
\begin{equation}\label{init:err:fo}
(L_2^{\cp}(\emptyset))^2 = \int_{\mathbb{S}^d} \int_{\mathbb{S}^d} K_{1,d}(\bm{x}, \bm{y}) \,\mathrm{d} \sigma_d(\bm{x}) \,\mathrm{d} \sigma_d(\bm{y}) = 1 - C_d I_d,
\end{equation}
where $I_d$ denotes the distance integral given as
\begin{equation}\label{def:Id}
I_d :=  \int_{\mathbb{S}^d} \int_{\mathbb{S}^d} \|\bm{x} - \bm{y}\| \,\mathrm{d} \sigma_d(\bm{x}) \,\mathrm{d} \sigma_d(\bm{y}).
\end{equation}

Before we continue with our explanations, we collect some estimates of the quantities $I_d$ and $C_d$ in the following lemma. The proof is deferred to Appendix~\ref{sec_dist_int}. 
\begin{lemma}
\label{lem:C.d.I.d.inequalities}
The constants $I_d$ of \eqref{def:Id} and $C_d$ of \eqref{def:Cd} satisfy for all dimensions $d \in \mathbb{N}$ the following relations:
\begin{enumerate}
\item $\displaystyle \sqrt{2} \, \sqrt{1 - \frac{1}{d}} < I_d < \sqrt{2}$; furthermore, $\displaystyle I_d = \sqrt{2} \left( 1 - \tfrac{1}{8d} + \mathcal{O}\big( \tfrac{1}{d^2} \big) \right)$ as $d \to \infty$;
\item $\displaystyle \frac{1}{\sqrt{2 \pi \, d}} \, \sqrt{1 - \frac{1}{d}} < C_d < \frac{1}{\sqrt{2 \pi \, d}}$;
\item $\displaystyle \frac{1}{\sqrt{\pi \, d}} \left( 1 - \frac{1}{d} \right) < C_d \, I_d < \frac{1}{\sqrt{\pi \, d}}$;
\item $\displaystyle  \frac{1 - \frac{1}{d}}{\sqrt{\pi \, d} - \left( 1 - \frac{1}{d} \right)} < \frac{C_d \, I_d}{1 - C_d \, I_d} < \frac{1}{\sqrt{\pi \, d} - 1}$.
\end{enumerate}
\end{lemma}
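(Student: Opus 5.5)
The plan is to reduce all four items to two-sided estimates for ratios of Gamma functions. First I will derive a closed form for $I_d$. By rotational invariance, $I_d=\int_{\mathbb{S}^d}\|\bm{x}-\bm{y}\|\,\mathrm{d}\sigma_d(\bm{x})$ for any fixed $\bm{y}$. Using $\|\bm{x}-\bm{y}\|=\sqrt{2-2u}$ with $u=\langle\bm{x},\bm{y}\rangle$ and the Funk--Hecke density $\frac{\Gamma((d+1)/2)}{\sqrt{\pi}\,\Gamma(d/2)}(1-u^2)^{d/2-1}$, the integral becomes a Beta integral. This gives
\[
I_d=\frac{2^{d}\,\Gamma(\tfrac{d+1}{2})^2}{\sqrt{\pi}\,\Gamma(d+\tfrac12)}
=\sqrt{2}\,\frac{\Gamma(\tfrac{d}{2}+\tfrac12)^2}{\Gamma(\tfrac d2+\tfrac14)\,\Gamma(\tfrac d2+\tfrac34)},
\]
where the second form uses Legendre's duplication formula with $z=\tfrac d2+\tfrac14$. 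As a check, $d=2$ gives $4/3$. This form isolates the factor $\sqrt{2}$ times a Gamma ratio that is close to $1$.

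For item 1, the upper bound $I_d<\sqrt2$ follows from strict log-convexity of $\Gamma$: $\Gamma(x)^2<\Gamma(x-\tfrac14)\Gamma(x+\tfrac14)$ with $x=\tfrac d2+\tfrac12$. For the lower bound I will write the ratio as
\[
\frac{\Gamma(x)}{\Gamma(x-\tfrac14)}\cdot\frac{\Gamma(x)}{\Gamma(x+\tfrac14)}
\]
and bound each factor with Wendel's inequality $\bigl(\tfrac{y}{y+s}\bigr)^{1-s}\le \frac{\Gamma(y+s)}{y^s\Gamma(y)}\le 1$ for $0<s<1$. Here $y=x-\tfrac14$ and $s=\tfrac14$ for the first factor, and $y=x$ (or $y=x-\tfrac14$ with $s=\tfrac12$) for the second. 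The aim is a lower bound for the product of the form $\bigl(1-\tfrac{c}{x^2}\bigr)^{\text{power}}$, which I then compare with $\sqrt{1-1/d}$. Since the true deficit is of order $1/(8d)$ while $\sqrt{1-1/d}\approx 1-\tfrac1{2d}$, there is ample slack for large $d$. A few small $d$ may need to be checked directly from the closed form. The asymptotic expansion $I_d=\sqrt2\,(1-\tfrac1{8d}+\mathcal{O}(d^{-2}))$ comes from the standard expansion $\log\Gamma(x+a)-\log\Gamma(x)=a\log x+\frac{a(a-1)}{2x}+\mathcal{O}(x^{-2})$, applied with $a=\pm\tfrac14$ relative to $x=\tfrac d2+\tfrac12$.

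For item 2, I write $C_d=\frac{1}{d\sqrt\pi}\,\frac{\Gamma(x+1/2)}{\Gamma(x)}$ with $x=d/2$. Wendel's inequality with $s=\tfrac12$ gives
\[
\sqrt{x}\,\sqrt{\tfrac{x}{x+1/2}}\le \frac{\Gamma(x+\tfrac12)}{\Gamma(x)}\le\sqrt x .
\]
Hence $\sqrt{\tfrac{d}{d+1}}\,\frac{1}{\sqrt{2\pi d}}\le C_d\le \frac{1}{\sqrt{2\pi d}}$, and $\tfrac{d}{d+1}>1-\tfrac1d$ gives the claim. Strictness comes from strict convexity, since equality in Wendel's inequality fails for $0<s<1$.

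Item 3 then follows by multiplying items 1 and 2, since $\sqrt{1-1/d}\cdot\sqrt{1-1/d}=1-1/d$. Item 4 follows from item 3 because $u\mapsto u/(1-u)$ is increasing on $(0,1)$. Substituting $u$ between $\frac{1-1/d}{\sqrt{\pi d}}$ and $\frac{1}{\sqrt{\pi d}}$ and simplifying gives exactly the stated bounds. I expect the main obstacle to be the lower bound in item 1: choosing the Wendel/Gautschi instances so that the resulting elementary inequality holds for all $d\ge1$, and not just asymptotically. If needed, I would instead use Kershaw's sharper inequality, or verify small $d$ numerically from the explicit formula.
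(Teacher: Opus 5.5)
Your proof is correct. It rests on the same fact as the paper's, strict log-convexity of $\Gamma$, but packages it differently.

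\textbf{How the paper does it.} The paper uses only the midpoint inequality $\Gamma(\frac{u+v}{2})<\sqrt{\Gamma(u)\Gamma(v)}$ with unit steps, together with $\Gamma(z+1)=z\Gamma(z)$; for example, $\Gamma(\frac d2)<\sqrt{\frac{d-1}{2}}\,\Gamma(\frac{d-1}{2})$. For the lower bound on $I_d$ it stays in the undoubled closed form: it bounds $\Gamma(d+\frac12)<\sqrt d\,\Gamma(d)$, duplicates $\Gamma(d)$, and recycles the $C_d$ estimate. For $I_d<\sqrt2$ it shows via a digamma series that $\alpha\mapsto\Gamma(\frac d2+\alpha)\Gamma(\frac d2+1-\alpha)$ is minimal at $\alpha=\frac12$. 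Your one-line log-convexity argument does this more directly.

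\textbf{How your route differs.} You quote Wendel's inequality, work in the duplicated form with quarter shifts, and derive the closed form of $I_d$, which the paper only cites. The asymptotic part is the same Gamma-ratio expansion in both. Your route gives a slightly sharper bound for $C_d$ (factor $\sqrt{d/(d+1)}$ instead of $\sqrt{1-1/d}$). It also works uniformly down to $d=1$, whereas the paper's lower-bound chain formally passes through $\Gamma(\frac{d-1}{2})$, undefined at $d=1$; this is harmless, since the claimed lower bounds are $0$ there. The paper's route is more self-contained, needing nothing beyond midpoint log-convexity.

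\textbf{Correction on item 1.} Your assessment of the lower bound in item 1 is off: the slack is not ample, and the choice of Wendel instances matters.
\begin{itemize}
\item Take $y=x-\frac14$, $s=\frac14$ for the first factor and $y=x$, $s=\frac14$ for the second. The product collapses to exactly $\frac{x-1/4}{x}$, so $I_d>\sqrt2\,\bigl(1-\frac{1}{2(d+1)}\bigr)$. Since $\bigl(1-\frac{1}{2(d+1)}\bigr)^2>1-\frac{1}{d+1}>1-\frac1d$, this settles the claim for every $d\ge1$ with no numerical checks.
\item However, this bound coincides with $\sqrt{1-1/d}$ to first order: both are $1-\frac{1}{2d}+\mathcal{O}(d^{-2})$. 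The reason is that Wendel's bounds lose a factor $4$ against the true deficit $\frac{1}{16x}$.
\item Consequently your parenthetical alternative ($y=x-\frac14$, $s=\frac12$ for the second factor) only yields $\bigl(1-\frac{1}{2(d+1)}\bigr)^{3/2}$. This falls below $\sqrt{1-1/d}$ for all $d\ge4$; for example, $0.854<0.866$ at $d=4$.
\item Likewise, a target of the form $(1-c/x^2)^{p}$ with fixed $p$ is unattainable. The true deficit is of order $1/x$, as your own asymptotic computation confirms.
\end{itemize}
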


From Lemma~\ref{lem:C.d.I.d.inequalities} one can easily deduce leading term asymptotics for the quantities $I_d$, $C_d$, $C_d \, I_d$, and $C_d \, I_d / ( 1 - C_d \, I_d )$ as $d \to \infty$.

Now we will continue with our explanations about the spherical cap $L_2$ discrepancy. Applying Lemma~\ref{lem:C.d.I.d.inequalities} to the definition of the reproducing kernel we obtain 
\begin{equation}\label{kernel_bounds}
1 - 2 C_d \le K_{1,d}(\bm{x}, \bm{y}) \le 1 
\end{equation}
and $K_{1,d}(\bm{x}, \bm{y}) \to 1$ as $d \to \infty$.

The distance integral has the explicit expression (see, e.g., \cite{BSSW14})
\begin{equation}
\label{eq:I.d.value}
I_d = 2^d \, \frac{\Gamma( \tfrac{d + 1}{2} ) \, \Gamma( \tfrac{d + 1}{2} )}{\sqrt{\pi} \, \Gamma( d + \tfrac{1}{2} )}
\sim
\sqrt{2} 
\qquad
\text{as $d \to \infty$,}
\end{equation}
where the asymptotic formula is obtained from Lemma~\ref{lem:C.d.I.d.inequalities}. Since, again by Lemma~\ref{lem:C.d.I.d.inequalities}, $C_d \, I_d \sim 1/\sqrt{\pi d}$ as $d \to \infty$, we observe
\begin{equation*}
L_2^{\cp}(\emptyset) = \sqrt{1 - C_d \, I_d} \sim \sqrt{ 1 - \frac{1}{\sqrt{\pi d}}} \sim 1 - \frac{1}{\sqrt{4\pi d}} \qquad \text{as $d \to \infty$.}
\end{equation*}

The blessing of dimensionality can also be observed from the reproducing kernel $K_{1,d}$. As noted above, as $d \to \infty$, we have $C_d \to 0$ and $K_{1,d} \to 1$. This means that the corresponding reproducing kernel Hilbert space converges to a space of only constant functions (which can be integrated exactly using just one point). Again, this illustrates that the integration problem becomes easier as the dimension $d$ increases.

\paragraph{The reproducing kernel corresponding to the large spherical cap $L_2$ discrepancy.} As will be shown in Section~\ref{sec:tract} (in particular \eqref{Kalpha}), the reproducing kernel associated with the large spherical cap $L_2$ discrepancy is given by
\begin{equation*}
K_{\alpha_d, d}(\bm{x}, \bm{y}) = 1 - \frac{C_d}{\alpha_d} \|\bm{x} - \bm{y}\|, \qquad \bm{x},\bm{y} \in \mathbb{S}^d.
\end{equation*}
Arguably, the most interesting special case is
\begin{equation*}
\alpha_d = \sqrt{2} \, C_d,
\end{equation*}
see again Section~\ref{sec:tract}, in particular Eq.~\eqref{eq_alphadchoice}. In this case, the reproducing kernel 
\begin{equation}\label{Ksqrt2Cd}
K_{\sqrt{2} C_d,d}(\bm{x}, \bm{y}) = 1 - \frac{1}{\sqrt{2}} \|\bm{x} - \bm{y}\|, \qquad \bm{x},\bm{y} \in \mathbb{S}^d
\end{equation}
does not converge to $1$ as $d \to \infty$ and hence the corresponding reproducing kernel Hilbert space does not converge to a space of only constant functions and the associated numerical integration problem becomes not easier with increasing dimension. In fact, we will show in \eqref{Nsqrt2Cd} that for this choice the inverse discrepancy grows with the dimension with order at least $\sqrt{d}$ and at most $d$.

\paragraph{Comparison with the discrepancy on the unit cube.}

The canonical example is the $L_2$ discrepancy of $N$-point sets in the $d$-dimensional unit cube $[0,1)^d$ with respect to the uniform (Lebesgue) measure $\lambda$ given by
\begin{equation}\label{def:L2discCube}
L_2(P_{N,d})=\left(\int_{[0,1]^d} \left| \frac{1}{N} \sum_{n=1}^N \bm{1}_{[\bm{0}, \bm{t})}(\bm{x}_n) - \lambda([\bm{0}, \bm{t})) \right|^2 {\rm d} \bm{t}\right)^{1/2},
\end{equation}
where $P_{N,d}=\{\bm{x}_1,\ldots,\bm{x}_N\} \subseteq [0,1)^d$, $[\bm{0},\bm{t})$ denotes the axis-parallel box anchored at the origin with opposite corner $\bm{t}\in[0,1]^d$, and $\bm{1}_{[\bm{0},\bm{t})}$ is the indicator function of the box $[\bm{0},\bm{t})$. Similarly as for spheres, the $L_2$ discrepancy on the unit cube coincides with the worst-case error of quasi-Monte Carlo (QMC) rules for numerical integration. For background, see \cite[Chapter~9]{NW10}.

For the $L_2$ discrepancy on the unit cube \eqref{def:L2discCube} it is known that the number of points required to reduce the initial discrepancy by a factor of $\varepsilon$ increases exponentially with the dimension -- a behavior termed ``curse of dimensionality'', see, e.g., \cite{NP25b,NW10,Wo99}. 

Various other notions of how $N(\varepsilon,d)$ depends on $d$ have been discussed in the literature. If $N(\varepsilon, d)$ grows at most polynomially in $d$ and $\varepsilon^{-1}$, the problem is \emph{polynomially tractable}; if the growth is at most polynomial in $\varepsilon^{-1}$ uniformly in $d$, one speaks of \emph{strong polynomial tractability}. Recent years have seen many discrepancy notions classified according to the growth of their inverse; see, e.g., \cite{NP25a,NW08,NW10}.

\paragraph{Consequences and examples.}
The blessing-of-dimensionality result has concrete implications. In applications that naturally live on high-dimensional spheres -- such as directional statistics for very high-dimensional data, numerical integration of isotropic kernels in machine learning on spheres, or Monte Carlo rendering on spherical domains -- the classical spherical cap $L_2$ discrepancy provides optimistic sample-complexity predictions as $d$ grows. When such optimism is undesirable, our modified discrepancy supplies a dimension-robust alternative with essentially the same algorithmic primitives.

\section{The blessing of dimensionality of the spherical cap $L_2$~discrepancy}\label{sec:scd1}

The following result shows the blessing of dimensionality for the classical spherical cap $L_2$ discrepancy. 

\begin{theorem}
\label{thm:1}
For all dimensions $d \in \mathbb{N}$ holds
\begin{equation*}
N^{\cp}(\varepsilon, d) \leq \left\lceil \frac{1}{\varepsilon^2} \, \frac{1}{\sqrt{\pi  d} - 1} \right \rceil.
\end{equation*}
\end{theorem}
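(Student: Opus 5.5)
The plan is a standard averaging (probabilistic) argument. I will show that the expected squared discrepancy of $N$ i.i.d.\ uniformly distributed points is small, and then compare it with the initial discrepancy computed in \eqref{init:err:fo}.

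Let $\bm{x}_1,\ldots,\bm{x}_N$ be independent and $\sigma_d$-distributed. Start from the kernel representation \eqref{L2_kernel}:
\[
(L_2^{\cp}(P_{N,d}))^2=\frac{1}{N^2}\sum_{m,n=1}^N K_{1,d}(\bm{x}_m,\bm{x}_n)-\int_{\mathbb{S}^d}\int_{\mathbb{S}^d} K_{1,d}\,\mathrm{d}\sigma_d\,\mathrm{d}\sigma_d .
\]
Take expectations term by term.

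\begin{itemize}
\item For the $N(N-1)$ off-diagonal pairs $m\neq n$, the expectation equals the double integral $1-C_dI_d$.
\item The $N$ diagonal terms equal $K_{1,d}(\bm{x},\bm{x})=1$.
\end{itemize}

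Hence
\[
\mathbb{E}\,(L_2^{\cp}(P_{N,d}))^2=\frac{1}{N}+\frac{N-1}{N}(1-C_dI_d)-(1-C_dI_d)=\frac{C_dI_d}{N}.
\]
One could equally derive this directly from Definition~\ref{def1} as a variance computation: for fixed $(\bm{x},t)$ the local discrepancy is an average of $N$ centered Bernoulli variables. Fubini then gives $\frac1N\big(\int_{-1}^1\int_{\mathbb{S}^d} p(1-p)\,\mathrm{d}\sigma_d\,\mathrm{d}t\big)$ with $p=\sigma_d(C(\bm{x};t))$, and this should agree with $C_dI_d/N$ by \eqref{init:err:fo}. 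The kernel route is cleaner, so I will use it.

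Since the expectation is $C_dI_d/N$, there exists a realization $P_{N,d}$ with
\[
(L_2^{\cp}(P_{N,d}))^2\le \frac{C_dI_d}{N}.
\]
By \eqref{init:err:fo}, $(L_2^{\cp}(\emptyset))^2=1-C_dI_d$. It therefore suffices that
\[
\frac{C_dI_d}{N}\le \varepsilon^2(1-C_dI_d),
\]
that is,
\[
N\ge \varepsilon^{-2}\,\frac{C_dI_d}{1-C_dI_d}.
\]
Item~4 of Lemma~\ref{lem:C.d.I.d.inequalities} gives
\[
\frac{C_dI_d}{1-C_dI_d}<\frac{1}{\sqrt{\pi d}-1}.
\]
So choosing $N=\lceil \varepsilon^{-2}(\sqrt{\pi d}-1)^{-1}\rceil$ works. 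This requires $\sqrt{\pi d}>1$, which holds for all $d\ge1$. If the bracket is below $1$, the ceiling is $1$, and the argument still applies with $N=1$, since any single point has expected squared discrepancy $C_dI_d$.

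The only delicate step is making sure the kernel identity \eqref{L2_kernel} is applied with the correct diagonal value $K_{1,d}(\bm{x},\bm{x})=1$, so that the cross terms cancel exactly. All the quantitative work is already contained in Lemma~\ref{lem:C.d.I.d.inequalities}.
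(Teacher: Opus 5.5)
Your proposal is correct and follows essentially the same route as the paper. The paper computes $\mathbb{E}\bigl[(L_2^{\cp}(P_{N,d}))^2\bigr]=C_dI_d/N$ from the distance form \eqref{eq:Stolarsky.principle} rather than the kernel form \eqref{L2_kernel}, which is the same identity since $K_{1,d}(\bm{x},\bm{y})=1-C_d\|\bm{x}-\bm{y}\|$. It then compares with $1-C_dI_d$ and applies item~4 of Lemma~\ref{lem:C.d.I.d.inequalities} exactly as you do.
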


\begin{proof}
Assume that the points in $P_{N, d} = \{\bm{x}_1, \ldots, \bm{x}_N\}$ are chosen randomly i.i.d. uniformly distributed on $\mathbb{S}^d$. Then using Stolarsky's invariance principle \eqref{eq:Stolarsky.principle} and \eqref{def:Id}
\begin{equation}\label{exp_rand}
\begin{split}
\mathbb{E} \left[ (L^{\cp}_2(P_{N,d}))^2 \right] 
&= 
C_d \left( I_d - \frac{1}{N^2} \sum_{m,n=1}^N \mathbb{E}\left[ \| \bm{x}_m - \bm{x}_n\| \right] \right) \\
&= 
C_d \left( I_d - \frac{N^2-N}{N^2} I_d \right) = \frac{C_d I_d}{N},
\end{split}
\end{equation}
where $C_d$ is defined in \eqref{def:Cd}.\footnote{We note that in \cite[Section~7]{BSSW14} the expected value for the squared worst-case error for a whole class of reproducing kernels is obtained.} From \eqref{exp_rand} it follows that for each dimension $d$ and each natural number $N$, there exists a point set $P_{N,d}$ such that
\begin{equation*}
L_2^{\cp}(P_{N,d}) \le \sqrt{\frac{C_d I_d}{N}}.
\end{equation*}
Comparing with the initial spherical cap $L_2$ discrepancy \eqref{init:err:fo}, we find that $N^{\cp}(\varepsilon, d) \le N$ for any $N \in \mathbb{N}$ which satisfies
\begin{equation*}
\sqrt{\frac{C_d I_d}{N (1-C_d I_d)}} \le \varepsilon \quad \mbox{ or, equivalently,}\quad N \ge \frac{1}{\varepsilon^2} \frac{C_d I_d}{1-C_d I_d}.
\end{equation*}
Therefore
\begin{equation*}
N^{\cp}(\varepsilon, d) \le \left\lceil \frac{1}{\varepsilon^2} \frac{C_d I_d}{1-C_d I_d} \right\rceil.
\end{equation*}
The result follows by item 4. of Lemma~\ref{lem:C.d.I.d.inequalities}.
\end{proof}

Theorem~\ref{thm:1} means that the integration problem becomes easier as the dimension $d$ increases, since $N^{\cp}(\varepsilon, d)$ decreases with increasing dimension $d$. This can be understood by considering \eqref{kernel_bounds}. As $d \rightarrow \infty$, the kernel $K_{1,d}$ approaches the constant function $1$, which means that the corresponding reproducing kernel Hilbert space approaches the space of constant functions. For instance, for the special case where $N = 1$, due to symmetry, the location of $\bm{x}_1$ is irrelevant. Then for any fixed $\bm{x}_1 \in \mathbb{S}^d$ we have
\begin{equation*}
\mathbb{E}\left[(L_2^{\cp}(P_{1,d}))^2 \right] = (L_2^{\cp}(\{\bm{x}_1\}))^2 = C_d I_d \sim \frac{1}{\sqrt{\pi d}} \quad \mbox{ as } d \to \infty.
\end{equation*}
Since a constant function can be integrated exactly using only one point, this illustrates that the space of functions converges to the space of constant functions.

\section{Tractability properties of the large spherical cap $L_2$ discrepancy}\label{sec:tract}

\paragraph{Stolarsky's invariance principle for the large spherical cap discrepancy.}
For $\alpha_d \in [0, 1]$ we also define
\begin{equation*}
K_{\alpha_d,d}(\bm{x}, \bm{y}) := \int_{-1}^1 \int_{\mathbb{S}^d} \bm{1}_{C(\bm{z}; \alpha_d \, t)}(\bm{x}) \bm{1}_{C(\bm{z}; \alpha_d \, t)}(\bm{y}) \,\mathrm{d} \sigma_d(\bm{z}) \,\mathrm{d} t.
\end{equation*}
Note that $K_{\alpha_d,d}(\bm{x}, \bm{y}) = K_{\alpha_d,d}(\bm{y}, \bm{x})$ and for any $N \in \mathbb{N}$, any $c_1,\ldots,c_N \in \mathbb{C}$ and any $\bm{x}_1,\ldots,\bm{x}_N \in \mathbb{S}^d$ we have
\begin{equation*}
\sum_{n,m=1}^N c_n \overline{c}_m K_{\alpha_d,d}(\bm{x}_n, \bm{x}_m) = \int_{-1}^1 \int_{\mathbb{S}^d} \left| \sum_{n=1}^N c_n \bm{1}_{C{(\bm{z}; \alpha_d \, t)}}(\bm{x}_n) \right|^2 \,\mathrm{d} \sigma_d(\bm{z})\,\mathrm{d} t \ge 0.
\end{equation*}
Thus, $K_{\alpha_d,d}$ is a reproducing kernel.

Using the approach from \cite[Section~2]{BD13a} we obtain for any $\alpha_d \in (C_d I_d, 1]$ that
\begin{equation}\label{Kalpha}
K_{\alpha_d,d}(\bm{x}, \bm{y}) = 1 - \frac{C_d}{\alpha_d} \left\|\bm{x} - \bm{y} \right\|.
\end{equation}

\begin{remark}\rm\label{rm_rk}
For $\alpha_d \in (C_d I_d,1]$ we have that the kernel $K_{\alpha_d,d}$ can be expanded as a series in spherical harmonics with only diagonal terms and whose coefficients are all positive. For the constant part of the kernel this follows from item 1. in Lemma~\ref{lem:C.d.I.d.inequalities} and for the remaining coefficients in the Karhunen-Leove expansion this follows from \cite{BD13a}. This implies that $K_{\alpha_d,d}$ is a reproducing kernel for the Sobolev space $H^{(d+1)/2}$, which is the space defined by the classical kernel $K_{1,d}$. Thus for any $\alpha_d \in (C_d I_d, 1]$, the numerical integration problem is based on the same function space $H^{(d+1)/2}$, albeit with a different (but equivalent) norm, which exhibits a different dependence on the dimension. This applies, in particular, to $\alpha_d = \sqrt{2} \, C_d \in (C_d I_d, 1]$, which leads to the reproducing kernel \eqref{Ksqrt2Cd}. Note that $C_d \, I_d < \sqrt{2} \, C_d \le \sqrt{2\pi} \, C_d \le 1$, using Lemma~\ref{lem:C.d.I.d.inequalities}.
\end{remark}

For any $\alpha_d \in (0,1]$, the corresponding Stolarsky invariance principle is
\begin{equation}
\label{eq:Stolarsky.gen}
\begin{split}
(L_2^{\cp,a_d}(P_{N,d}))^2 =&  \frac{1}{N^2} \sum_{n,m=1}^N K_{\alpha_d,d}(\bm{x}_n, \bm{x}_m) - \int_{\mathbb{S}^d} \int_{\mathbb{S}^d} K_{\alpha_d,d}(\bm{x}, \bm{y}) \,\mathrm{d} \sigma_d(\bm{x}) \,\mathrm{d} \sigma_d(\bm{y}).
\end{split}
\end{equation}

\begin{remark}\rm
By Stolarsky's invariance principle, point sets $\{\bm{x}_1^*, \ldots, \bm{x}_N^*\}$ which maximize $\sum_{n,m=1}^N \|\bm{x}_n - \bm{x}_m\|$, minimize the spherical cap $L_2$ discrepancy. By \eqref{eq:Stolarsky.gen}, such point sets also minimize the $\alpha_d$-large spherical cap $L_2$ discrepancy for any $\alpha_d \in (0,1]$.
\end{remark}

\paragraph{Tractability of a normalized kernel.} As reference value, we define the initial large spherical cap $L_2$ discrepancy by
\begin{equation*}
L_{2,\alpha_d}(\emptyset) = \left(\int_{\mathbb{S}^d} \int_{\mathbb{S}^d} K_{\alpha_d, d}(\bm{x}, \bm{y}) \,\mathrm{d} \sigma_d(\bm{x}) \,\mathrm{d} \sigma_d(\bm{y})\right)^{1/2} = \left(1 - \frac{C_d I_d}{\alpha_d}\right)^{1/2}.
\end{equation*}

\begin{definition}\rm
For $\varepsilon > 0$ and $d \in \mathbb{N}$ the inverse of the large spherical cap $L_2$ discrepancy is defined as 
\begin{equation*}
N^{\cp,\alpha_d}(\varepsilon, d) := \inf \{ N \in \mathbb{N}: \exists P_{N,d} \subseteq \mathbb{S}^d \ \text{such that}\ L_2^{\cp,\alpha_d}(P_{N,d}) \le \varepsilon L_2^{\cp,\alpha_d}(\emptyset) \}.
\end{equation*}
\end{definition}

\begin{theorem}\label{thm1}
For all $\varepsilon \in (0,1)$ and all $d \in \mathbb{N}$, we have
\begin{equation}\label{Ned_alpha}
\frac{1}{8} \left( \frac{1}{\sqrt{d}} \, \frac{1}{\varepsilon^2} \, \frac{C_d \, I_d}{\alpha_d - C_d \, I_d} \right)^{\tfrac{d}{d+1}} 
\leq 
N^{\cp,\alpha_d}(\varepsilon, d) 
\leq \left\lceil\frac{1}{\varepsilon^2} \, \frac{C_d \, I_d}{\alpha_d - C_d \, I_d}\right\rceil.
\end{equation}
\end{theorem}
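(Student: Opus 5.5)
The upper bound follows from the random-point argument in the proof of Theorem~\ref{thm:1}, applied to the kernel $K_{\alpha_d,d}$ in \eqref{Kalpha}. Take $\bm{x}_1,\dots,\bm{x}_N$ i.i.d.\ uniform on $\mathbb{S}^d$ and insert them into the Stolarsky principle \eqref{eq:Stolarsky.gen}. The constant part of the kernel cancels. The diagonal terms contribute $\frac{C_d}{\alpha_d}I_d/N$ and the off-diagonal terms cancel against the double integral in expectation, so
\[
\mathbb{E}\big[(L_2^{\cp,\alpha_d}(P_{N,d}))^2\big]=\frac{C_dI_d}{\alpha_d N}.
\]
Hence some $P_{N,d}$ attains at most this value. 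Comparing with $(L_2^{\cp,\alpha_d}(\emptyset))^2=1-C_dI_d/\alpha_d$, which is positive because $\alpha_d>C_dI_d$, the requirement becomes
\[
N\ge \varepsilon^{-2}\,\frac{C_dI_d}{\alpha_d-C_dI_d},
\]
and taking the ceiling gives the right-hand inequality.

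For the lower bound I would again use \eqref{eq:Stolarsky.gen} and \eqref{Kalpha}, which give for every $P_{N,d}$
\[
(L_2^{\cp,\alpha_d}(P_{N,d}))^2=\frac{C_d}{\alpha_d}\Big(I_d-\frac1{N^2}\sum_{m,n}\|\bm{x}_m-\bm{x}_n\|\Big)=\frac{1}{\alpha_d}\,(L_2^{\cp}(P_{N,d}))^2 .
\]
So it suffices to prove a dimension-explicit lower bound for the classical discrepancy, of the form
\[
(L_2^{\cp}(P_{N,d}))^2\ \ge\ c\,\frac{C_dI_d}{\sqrt d}\,N^{-1-1/d}
\]
with an absolute constant $c$ (essentially $8^{-(d+1)/d}$, up to harmless factors). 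Granting this, the condition $L_2^{\cp,\alpha_d}\le\varepsilon L_2^{\cp,\alpha_d}(\emptyset)$ forces
\[
c\,\frac{C_dI_d}{\sqrt d}\,N^{-(d+1)/d}\le \varepsilon^2(\alpha_d-C_dI_d).
\]
Solving for $N$ and raising to the power $d/(d+1)$ gives the left inequality with the factor $1/8$.

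The main work is this explicit Beck-type bound, and I would prove it by a local emptiness argument directly from Definition~\ref{def1}. Choose a geodesic radius $\rho$ with $\sigma_d(\text{cap of radius }\rho)\approx 1/(2N)$, so that $\rho\asymp N^{-1/d}$ in a $d$-explicit way. Fix a centre $\bm{z}$ and consider heights $t$ near the boundary of a cap. As $t$ moves through an interval of length $\delta\approx\rho\sin\theta$, the measure $\sigma_d(C(\bm{z};t))$ changes by at least $\approx \delta\cdot\frac{\mathrm{d}}{\mathrm{d}t}\sigma_d(C(\bm{z};t))$, whose density is of size $\asymp\sqrt d\,(1-t^2)^{(d-2)/2}$ (compare \eqref{asym:Cxt}). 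On the other hand, the counting term changes only if a point enters the thin annulus swept out. Averaging over $\bm{z}$, the expected number of points in such an annulus is at most $N$ times its measure. On the event that the annulus is empty, the local discrepancy at one of the two endpoint heights is at least half the measure increment.

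Squaring this and integrating over $t$ and $\bm{z}$ should give a bound of order $N^{-1}\rho$ times a factor that, after comparison with Lemma~\ref{lem:C.d.I.d.inequalities} items 2 and 3, equals $C_dI_d/\sqrt d$. The hardest step is tracking the $d$-dependence precisely: the annulus measure and the cap-measure derivative must be balanced so that only polynomial factors in $d$, and no exponential ones, survive. I would restrict to $|t|\lesssim 1/\sqrt d$, where $(1-t^2)^{d/2}$ is bounded below by an absolute constant, and accept the resulting $1/\sqrt d$ loss, which appears to be exactly the $1/\sqrt d$ present in the statement.
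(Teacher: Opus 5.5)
Your upper bound is correct and agrees with the paper. So is your reduction $(L_2^{\cp,\alpha_d}(P_{N,d}))^2=\alpha_d^{-1}(L_2^{\cp}(P_{N,d}))^2$. Your target inequality $(L_2^{\cp})^2\ge c\,C_dI_d\,d^{-1/2}N^{-1-1/d}$ also has exactly the shape of the paper's \eqref{L2_lower_bound} with $\alpha_d=1$; there $\sqrt d\,F(x_d^*)/(2\sqrt{2\pi})\to\sqrt{\mathrm e}/(2\sqrt\pi)$.

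\textbf{The gap.} Your local emptiness argument cannot produce the rate $N^{-1-1/d}$.

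\begin{itemize}
\item The annulus $C(\bm z;t)\setminus C(\bm z;t+\delta)$ has measure exactly equal to the cap-measure increment; call it $\mu$. The only control you use on emptiness is the first-moment bound $\sigma_d\{\bm z:\text{annulus empty}\}\ge 1-N\mu$.
\item Hence, for each height, the gain your argument can certify is at most $\tfrac12\max_{0\le\mu\le 1/N}(1-N\mu)\mu^2=\tfrac{2}{27}N^{-2}$, whatever width you choose.
\item With your choice $\delta\approx\rho\sin\theta$, $\rho\asymp N^{-1/d}$ and $|t|\lesssim d^{-1/2}$, you get $\mu\approx dC_d\,\delta\asymp\sqrt d\,N^{-1/d}$. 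The expected number of points in the annulus is then $\asymp\sqrt d\,N^{1-1/d}\gg 1$, so Markov's inequality gives nothing, and the claimed order $N^{-1}\rho$ does not follow.
\item Shrinking the annulus until $N\mu\le\tfrac12$ gives only the trivial bound $(L_2^{\cp})^2\gtrsim d^{-1/2}N^{-2}$. This leads to $N^{\cp,\alpha_d}(\varepsilon,d)\gtrsim\bigl(\varepsilon^{-2}C_dI_d/(\alpha_d-C_dI_d)\bigr)^{1/2}$, not the exponent $d/(d+1)$; the two agree only for $d=1$.
\end{itemize}

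The missing factor $N^{1-1/d}$ comes from fluctuations of size $\sqrt{N\mu}$ in the counts for thick annuli. That is Beck's phenomenon, and it needs a global positive-definiteness argument rather than a local one.

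\textbf{The paper's route.} It writes $\tfrac1{\sqrt2}\|\bm x-\bm y\|=1-\sum_{\ell\ge1}b_\ell\langle\bm x,\bm y\rangle^\ell$ with $b_\ell=-(-\tfrac12)_\ell/\ell!>0$. Inserting this into \eqref{eq:Stolarsky.gen} with \eqref{Kalpha} gives $(L_2^{\cp,\alpha_d})^2=\frac{\sqrt2C_d}{\alpha_d}\sum_{\ell}b_\ell\Delta_\ell$. Here each $\Delta_\ell=\frac1{N^2}\sum_{m,n}\langle\bm x_m,\bm x_n\rangle^\ell-\int\!\!\int\langle\bm x,\bm y\rangle^\ell\,\mathrm d\sigma_d\,\mathrm d\sigma_d$ is nonnegative. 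The argument then runs as follows.

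\begin{itemize}
\item Keep only the terms $\ell=2r$ with $r\ge M$.
\item Bound the double sum below by its diagonal, $1/N$; this uses that even powers are nonnegative.
\item Bound the integral above by $\beta M^{-d/2}$, where $\beta=\Gamma(\frac{d+1}2)/\Gamma(\frac12)$.
\item Use $\sum_{r\ge M}b_{2r}\ge(2\sqrt{2\pi})^{-1}M^{-1/2}$.
\item Optimize in $M$. This gives $M^{d/2}\approx(d+1)\beta N$, hence $M^{-1/2}\approx\sqrt{2\mathrm e/d}\,N^{-1/d}$. The factor $d^{-1/2}$ comes from this step, not from restricting $t$.
\item Finally replace $\sqrt2C_d$ by the smaller $C_dI_d$.
\end{itemize}

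\textbf{The constant.} Obtaining the stated $1/8$ also requires real bookkeeping. You need $c^{d/(d+1)}\ge 1/8$, i.e.\ $c\approx 1/8$ for large $d$, so ``up to harmless factors'' is not enough. The paper secures $0.1395\ldots>1/8$ via Stirling's upper bound for $\Gamma$ and the monotonicity of the auxiliary functions $f$ and $h$.
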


\begin{proof}
The average large spherical cap $L_2$ discrepancy of a uniformly distributed point set on the sphere satisfies
\begin{align*}
\mathbb{E}\left[(L_2^{\cp,\alpha_d}(P_{N,d}))^2\right] & = \frac{1}{N} \left( \int_{\mathbb{S}^d} K_{\alpha_d, d}(\bm{x}, \bm{x}) \,\mathrm{d} \sigma_d(\bm{x}) - \int_{\mathbb{S}^d} \int_{\mathbb{S}^d} K_{\alpha_d, d}(\bm{x}, \bm{y}) \,\mathrm{d} \sigma_d(\bm{x}) \, \mathrm{d} \sigma_d(\bm{y}) \right) \\ & = \frac{1}{N} \left( 1 - 1 + \frac{C_d I_d}{\alpha_d}  \right) = \frac{C_d I_d}{\alpha_d N}.
\end{align*}
As in the proof of Theorem~\ref{thm:1}, this shows that
\begin{equation*}
N^{\cp,\alpha_d}(\varepsilon, d) \le \left\lceil \frac{1}{\varepsilon^2} \, \frac{C_d I_d}{\alpha_d - C_d I_d }\right\rceil.
\end{equation*}
Hence, the upper bound in \eqref{Ned_alpha} is shown.

The lower bound in \eqref{Ned_alpha} follows from the general lower bound (shown in Appendix~\ref{app:lower.bound})
\begin{equation}\label{L2_lower_bound}
(L_2^{\cp,\alpha_d}(P_{N,d}))^2  \geq 
\frac{C_d I_d}{\alpha_d}  \, \frac{F(x_d^*)}{2 \sqrt{2\pi}} \, \frac{1}{N^{(d+1)/d}},
\end{equation}
where (cf. Eq.~\eqref{eq:F.x.star})
\begin{equation*}
F(x_d^*) = \frac{d}{(d+1)^{\tfrac{d+1}{d}}} \left(\frac{\Gamma(\frac{1}{2})}{\Gamma(\frac{d+1}{2})}\right)^{\tfrac{1}{d}} 
\end{equation*}
for any $N$-point set $P_{N,d}$ on $\mathbb{S}^d$. This implies that 
\begin{equation*}
N^{\cp,\alpha_d}(\varepsilon, d) 
\geq 
\left( \frac{1}{\varepsilon^2} \, \frac{C_d \, I_d}{\alpha_d - C_d \, I_d} \right)^{\tfrac{d}{d+1}} \left( \frac{F(x_d^*)}{2 \sqrt{2 \pi}} \right)^{\tfrac{d}{d+1}}.
\end{equation*}
We use \cite[Eq.~5.6.1]{NIST}, i.e. $\Gamma( x ) < \sqrt{2 \pi } \, x^{x-1/2} {\rm e}^{-x} \, {\rm e}^{1/(12x)}$ for $x \in \mathbb{R}^+$, and the fact that $\Gamma(\tfrac{1}{2})=\sqrt{\pi}$ to get
\begin{align*}
\left( \frac{F(x_d^*)}{2 \sqrt{2\pi}} \right)^{\tfrac{d}{d+1}}
&=
\frac{d^{\tfrac{d}{d+1}}}{d+1} \left( \frac{1}{2 \sqrt{2 \pi}} \right)^{\tfrac{d}{d+1}} \left(\frac{\Gamma(\frac{1}{2})}{\Gamma(\frac{d+1}{2})}\right)^{\tfrac{1}{d+1}}
\\
&>
\frac{1}{2 \sqrt{2 \pi}} \, \frac{d}{d+1} \left( \frac{2 \, \sqrt{2\pi}}{d} \right)^{\tfrac{1}{d+1}} \left(\frac{\sqrt{\pi}}{\sqrt{2\pi} \left( \frac{d+1}{2} \right)^{\tfrac{d}{2}} {\rm e}^{-\tfrac{d+1}{2}} \, {\rm e}^{\tfrac{1}{6(d+1)}}}\right)^{\tfrac{1}{d+1}}
\\
&= \frac{\sqrt{{\rm e}}}{2 \, \sqrt{\pi}} \left( \frac{1}{\sqrt{d}} \right)^{\tfrac{d}{d+1}}  f(d) \, h(d),
\end{align*}
where 
\begin{equation*}
f(d) := \left( \frac{\sqrt{2 \pi}}{d} \sqrt{1 + \frac{1}{d}}\right)^{\tfrac{1}{d+1}} \quad \mbox{ and } \quad h( d ) := \frac{{\rm e}^{-\tfrac{1}{6(d+1)^2}}}{\left( 1 + \frac{1}{d} \right)^{\tfrac{3}{2}}}.
\end{equation*}
We observe that the function $f$ has in $\mathbb{R}^+$ a unique minimum somewhere in $(8,9)$ such that $f(d) \geq f( 9 )$ for all $d \in \mathbb{N}$. Furthermore, the function $h$ is strictly monotonically increasing (as can be seen from its positive first derivative) in $\mathbb{R}^+$ such that $h(d) \geq h(1)$ for all $d \in \mathbb{N}$. Thus 
\begin{equation*}
N^{\cp,\alpha_d}(\varepsilon, d) 
\geq 
c \left( \frac{1}{\sqrt{d}} \, \frac{1}{\varepsilon^2} \, \frac{C_d \, I_d}{\alpha_d - C_d \, I_d} \right)^{\tfrac{d}{d+1}},
\end{equation*}
where 
\begin{equation*}
c := \frac{\sqrt{{\rm e}}}{2 \, \sqrt{\pi}} \, f(9) \, h(1) 
= 
\frac{5^{1/20} {\rm e}^{11/24}}{4 \cdot 2^{2/5} 3^{3/10} \pi^{9/20}}
=
0.1395\ldots 
>
\frac{1}{8}.
\end{equation*}

This finishes the proof of Theorem~\ref{thm1}.
\end{proof}

\paragraph{Discussion.} First we note that since $C_d I_d \le 1/\sqrt{\pi d}$, by  Lemma~\ref{lem:C.d.I.d.inequalities}, we have the blessing of dimensionality whenever $\alpha_d - C_d I_d \ge c > 0$ with an absolute constant $c$.

Now let $g:\mathbb{N} \to (0, 1)$ be a given function (we discuss particular choices below). By choosing $\alpha_d$ such that $1- I_d C_d / \alpha_d = g(d)$, i.e.
\begin{equation}\label{def_alphad}
\alpha_d = \frac{I_d C_d}{1- g(d)},
\end{equation}
the initial large spherical cap $L_2$ discrepancy is $L_2^{\cp,\alpha_d}(\emptyset) = \sqrt{g(d)}$. In order for the corresponding spaces to be equivalent to the Sobolev space $H^{(d+1)/2}$, we need to ensure that $1-\frac{C_d}{\alpha_d} I_d > 0$. This means that we need $\alpha_d > C_d I_d$, which is readily satisfied for $\alpha_d = I_d C_d / (1-g(d))$.


From \eqref{def_alphad} we obtain $C_d I_d/(\alpha_d- C_d I_d)=(1-g(d))/g(d)$. Substituting into Theorem~\ref{thm1} we obtain 
\begin{equation*}
\frac{1}{8} \left( \frac{1}{\sqrt{d}} \frac{1}{\varepsilon^2} \frac{1-g(d)}{g(d)} \right)^{\tfrac{d}{d+1}}  \le N^{\cp,\alpha_d}(\varepsilon, d) \le \left\lceil \frac{1}{\varepsilon^2} \frac{1-g(d)}{g(d)}\right\rceil.
\end{equation*}

For different choices of $g$ we observe rather different behaviors:
\begin{itemize}
\item If $g(d) \to 1$ as $d \to \infty$, then we get the blessing of dimensionality. This case comprises the classical spherical cap $L_2$ discrepancy from Section~\ref{sec:scd1}.

\item If $g(d) \to 0^+$ as $d \to \infty$, then we get from \eqref{def_alphad} that $\alpha_d \sim C_d I_d$ as $d\to \infty$. However, the dimension dependence can vary wildly depending on the decay rate of $g(d)$. In other words, $N^{\cp,\alpha_d}(\varepsilon, d)$ is very sensitive to small changes in $\alpha_d$. This phenomenon may be explained by \eqref{asym:Cxt}, which shows that the measure of spherical caps changes exponentially fast with the dimension, and so the measure of spherical caps $C(\bm{x};t)$ is very sensitive to $t$ for large dimensions.
\begin{itemize}
\item An important special case is $g(d)=1-I_d/\sqrt{2}$, which means $\alpha_d = \sqrt{2} C_d \in (I_d C_d, 1]$ by Lemma~\ref{lem:C.d.I.d.inequalities}. Then $g(d)\rightarrow 0^+$ as $d \to \infty$. According to Lemma~\ref{lem:C.d.I.d.inequalities} we have  
\begin{equation*}
\alpha_d - C_d I_d = \sqrt{2} C_d \left(1 - \frac{I_d}{\sqrt{2}}\right) \sim  \frac{\sqrt{2} C_d}{8d} \qquad \mbox{ as } d \to \infty
\end{equation*}
such that $$\frac{C_d I_d}{\alpha_d - C_d I_d} \sim \frac{\sqrt{2} C_d}{\alpha_d - C_d I_d} \sim 8 d  \qquad \mbox{ as } d \to \infty.$$
Hence, Theorem~\ref{thm1} gives
\begin{equation}\label{Nsqrt2Cd}
\left( \frac{ \sqrt{d} }{\varepsilon^2}  \right)^{\tfrac{d}{d+1}} \lesssim N^{\cp,\sqrt{2} C_d}(\varepsilon, d) \lesssim \frac{ d }{\varepsilon^2}.
\end{equation}
(The sign $\lesssim$ means that we suppress absolute positive constants in the notation.) In particular, we do not have the blessing of dimensionality in this case, but polynomial tractability. 

We recall that by choosing $\alpha_d = \sqrt{2} C_d$, we obtain 
\begin{equation}\label{eq_alphadchoice}
K_{\sqrt{2} C_d,d}(\bm{x}, \bm{y}) = 1 - \frac{1}{\sqrt{2}} \|\bm{x} - \bm{y}\|. 
\end{equation}
In this case, the constant $1/\sqrt{2}$ does not depend on the dimension, and so the kernel function $K_{\sqrt{2} C_d, d}$ does not tend to 1 as $d \to \infty$ (compare with~\eqref{kernel_bounds}). This kernel is also a reproducing kernel for the Sobolev space $H^{(d+1)/2}$ by Remark~\ref{rm_rk}.

\item If $g(d)$ decays polynomially fast to 0, say $g(d) \asymp d^{-\delta}$ as $d\to \infty$, then we do not obtain the blessing of dimensionality, but we obtain at least polynomial tractability. If $\delta >1/2$ we cannot obtain strong polynomial tractability. 
\item If $g(d)$ decays exponentially fast to 0, say $g(d) \asymp a^d$ as $d \to \infty$ for some $a \in (0,1)$, then we get the curse of dimensionality.
\end{itemize}

\item If, for some $c \in (0,1)$, $g(d) \to c$ as $d \to \infty$, then the information complexity does not increase as $d \to \infty$ and may even decrease, i.e., we have strong polynomial tractability and may even have the blessing of dimensionality in this case (our current bounds are not strong enough to show which scenario applies). 

In particular, an interesting choice to consider here would be $\alpha_d = \eta \, C_d$ for some fixed $\eta \in (\sqrt{2}, \sqrt{2\pi}]$. In this case we have $\alpha_d \in (C_d I_d, 1]$ by Lemma~\ref{lem:C.d.I.d.inequalities} and therefore $K_{\eta \, C_d, d}(\bm{x}, \bm{y}) = 1 - \eta^{-1} \|\bm{x} - \bm{y}\|$ is a reproducing kernel for the Sobolev space $H^{(d+1)/2}$.

\end{itemize}

\section{The hemisphere discrepancy and geodesic distance}\label{sec_hemisphere}

In \cite{BDM18} the authors consider a spherical cap discrepancy with respect to hemispheres, which is related to the geodesic distance. This corresponds to the case $\alpha_d \to 0^+$ in the previous section, i.e.
\begin{align*}
K_{\alpha_d, d}(\bm{x}, \bm{y}) & = \int_{-1}^1 \int_{\mathbb{S}^d} \bm{1}_{C(\bm{z}; \alpha_d \, t)}(\bm{x}) \bm{1}_{C(\bm{z}; \alpha_d \, t)}(\bm{y}) \,\mathrm{d} \sigma_d(\bm{z}) \,\mathrm{d} t \\ & \xrightarrow[\alpha_d\to0^+]{}  2 \int_{\mathbb{S}^d} \bm{1}_{C(\bm{z};0)}(\bm{x}) \bm{1}_{C(\bm{z}; 0)}(\bm{y}) \,\mathrm{d} \sigma_d(\bm{z}) = K_{0,d}(\bm{x}, \bm{y}).
\end{align*}

Formula \eqref{Kalpha} is no longer defined in the limit $\alpha_d \to 0^+$. Instead, the corresponding reproducing kernel is given by (using results from \cite{BDM18})
\begin{equation*}
K_{0,d}(\bm{x}, \bm{y}) = 2 \int_{\mathbb{S}^d} 1_{C(\bm{z}; 0)}(\bm{x}) 1_{C(\bm{z}; 0)}(\bm{y}) =   1 - d(\bm{x}, \bm{y}),
\end{equation*}
where $d(\bm{x}, \bm{y})$ is the normalized geodesic distance between $\bm{x}$ and $\bm{y}$.

The corresponding Stolarsky invariance principle is (see \cite[Theorem~3.1]{BDM18})
\begin{align*}
& \frac{1}{N^2} \sum_{n,m=1}^N K_{0,d}(\bm{x}_n, \bm{x}_m) - \int_{\mathbb{S}^d} \int_{\mathbb{S}^d} K_{0,d}(\bm{x}, \bm{y}) \,\mathrm{d} \sigma_d(\bm{x}) \,\mathrm{d} \sigma_d(\bm{y}) \\
& =  \int_{\mathbb{S}^d} \int_{\mathbb{S}^d} d(\bm{x}, \bm{y}) \,\mathrm{d} \sigma_d(\bm{x}) \,\mathrm{d} \sigma_d(\bm{y}) - \frac{1}{N^2} \sum_{n,m=1}^N d(\bm{x}_n, \bm{x}_m)  \\ & = 2 \int_{\mathbb{S}^d} \left| \frac{1}{2} - \frac{1}{N} \sum_{n=1}^N 1_{C(\bm{x}; 0)}(\bm{x}_n) \right|^2 \,\mathrm{d} \sigma_d(\bm{x})\\
& = (L_2^{\cp,0}(P_{N,d}))^2.
\end{align*}
Note that $\sigma_d(C(\bm{x}; 0)) = 1/2$ since $C(\bm{x}; 0)$ is a hemisphere. In particular, the measure of the hemisphere is independent of the dimension.

We know from \cite{BDM18} that 
\begin{equation*}
\int_{\mathbb{S}^d} \int_{\mathbb{S}^d} d(\bm{x}, \bm{y}) \,\mathrm{d} \sigma_d(\bm{x}) \,\mathrm{d} \sigma_d(\bm{y}) = \frac{1}{2}.
\end{equation*}
Hence, the initial hemisphere discrepancy is $L_2^{\cp,0}(\emptyset) = \frac{1}{\sqrt{2}}$. Using \cite[Theorem~3.2]{BDM18} we obtain that for even $N$, any centrally symmetric point set $P_{N,d}$ satisfies $L_2^{\cp,0}(P_{N,d}) = 0$. This shows that $N^{\cp,0}(\varepsilon, d)= 2$ for any $\varepsilon \in (0,1)$ and $d \in \mathbb{N}$.

\begin{appendix}
\section{Proof of Lemma~\ref{lem:C.d.I.d.inequalities} and of Eq.~\eqref{asym:Cxt}}\label{sec_dist_int} 

\begin{proof}[Proof of Lemma~\ref{lem:C.d.I.d.inequalities}] 

Both, $I_d$ in \eqref{def:Id} and $C_d$ in \eqref{def:Cd}, can be understood as a function of the real variable $d \in \mathbb{N}$.  
The log-convex property of the Gamma function, i.e., $\Gamma(\tfrac{1}{2}(u+v)) < \sqrt{\Gamma(u) \Gamma(v)}$ for $u,v \in \mathbb{R}^+$, will be essential for our proofs of the lower and upper bounds of $C_d$ and $I_d$. 

\paragraph{Lower bound for $C_d$:} 
Log-convexity of the Gamma function gives
\begin{equation}
\label{eq:Gamma.d.2.bound}
\Gamma(\tfrac{d}{2}) 
= \Gamma(\tfrac{1}{2}( \tfrac{d-1}{2} + (\tfrac{d-1}{2} + 1))) 
< \sqrt{\Gamma(\tfrac{d-1}{2})} \, \sqrt{\Gamma(\tfrac{d-1}{2} + 1 )} = \sqrt{\tfrac{d-1}{2}} \, \Gamma(\tfrac{d-1}{2}).
\end{equation}
Hence
\begin{equation*}
C_d = \frac{1}{d} \, \frac{\Gamma( \tfrac{d+1}{2} )}{\sqrt{\pi} \, \Gamma( \tfrac{d}{2} )} > \frac{1}{d} \, \frac{\tfrac{d-1}{2} \ \Gamma(\tfrac{d-1}{2})}{\sqrt{\pi} \, \sqrt{\tfrac{d-1}{2}} \, \Gamma( \tfrac{d-1}{2} )}
\end{equation*}
and the lower bound for $C_d$ follows. 

\paragraph{Upper bound for $C_d$:} Using \eqref{eq:Gamma.d.2.bound} with $d$ replaced by $d+1$ we obtain 
\begin{equation*}
C_d = \frac{1}{d} \frac{\Gamma(\tfrac{d+1}{2})}{\sqrt{\pi} \, \Gamma(\tfrac{d}{2})} < \frac{1}{d} \, \frac{\sqrt{d/2} \, \Gamma(\tfrac{d}{2})}{\sqrt{\pi} \, \Gamma(\tfrac{d}{2})} = \frac{1}{\sqrt{2\pi d}}.
\end{equation*}

\paragraph{Lower bound for $I_d$:} 
We estimate the denominator of \eqref{eq:I.d.value} from above using the log-convexity inequality and the duplication formula \cite[5.5.5]{NIST} for the Gamma function 
\begin{equation*}
\begin{split}
\Gamma( d + \tfrac{1}{2} ) 
&= \Gamma(\tfrac{1}{2}( d + ( d + 1 ) ) ) < \sqrt{\Gamma( d ) } \, \sqrt{\Gamma( d + 1 )} = \sqrt{d} \, \Gamma( d ) \\
&= \sqrt{d} \, 2^{d-1} \, \frac{1}{\sqrt{\pi}} \, \Gamma(\tfrac{d}{2}) \, \Gamma(\tfrac{d+1}{2})
\end{split}
\end{equation*}
and get
\begin{equation*}
I_d 
= 
2^d \, \frac{\Gamma( \tfrac{d + 1}{2} ) \, \Gamma( \tfrac{d + 1}{2} )}{\sqrt{\pi} \, \Gamma( d + \tfrac{1}{2} )}
> 2 \, \frac{\Gamma( \tfrac{d + 1}{2})}{\sqrt{d} \, \Gamma( \tfrac{d}{2} )}. 
\end{equation*}
Applying \eqref{eq:Gamma.d.2.bound}, we arrive at
\begin{equation*}
I_d > 2 \, \frac{\tfrac{d - 1}{2} \ \Gamma( \tfrac{d - 1}{2} )}{\sqrt{d} \, \sqrt{\tfrac{d-1}{2}} \, \Gamma( \tfrac{d-1}{2} )}
\end{equation*}
from which the lower bound follows. 

\paragraph{Upper bound for $I_d$:} The duplication formula for the Gamma function applied to the denominator of $I_d$ gives
\begin{equation} \label{eq:Id.b}
I_d = \sqrt{2} \, \frac{\Gamma(\tfrac{d}{2} + \tfrac{1}{2} ) \, \Gamma( \tfrac{d}{2} + \tfrac{1}{2} )}{\Gamma(\tfrac{d}{2} + \tfrac{1}{4} ) \, \Gamma( \tfrac{d}{2} + \tfrac{3}{4} )}.
\end{equation}
We shall show that the function
\begin{equation*}
f( \alpha ) := \Gamma( \tfrac{d}{2} + \alpha ) \, \Gamma( \tfrac{d}{2} + 1 - \alpha )
\end{equation*} 
has a unique minimum at $\alpha = 1/2$ in $[0,1]$ for each $d \in \mathbb{N}$. This shows the upper bound $I_d < \sqrt{2}$. The first derivative of $f$ can be expressed in terms of the digamma function $\psi( s ) := \Gamma^\prime(s) / \Gamma( s )$ for $s \neq 0, -1, -2, \dots$, i.e.
\begin{equation*}
\begin{split}
f^\prime( \alpha ) 
&= f( \alpha ) \left( \psi(\tfrac{d}{2} + \alpha ) - \psi( \tfrac{d}{2} + 1 - \alpha \right) \\
&= f( \alpha ) \, \sum_{n=0}^\infty \frac{2\alpha-1}{(n + \tfrac{d}{2} + 1 - \alpha ) \, ( n + \tfrac{d}{2} + \alpha )}, 
\end{split}
\end{equation*}
where we used the series expansion \cite[5.7.6]{NIST}
\begin{equation*}
\psi( s ) = - \gamma + \sum_{n=0}^\infty \left( \frac{1}{n+1} - \frac{1}{n + s} \right) \qquad \mbox{for } s \neq 0, -1, -2, \dots
\end{equation*}
and $\gamma$ denotes the Euler–Mascheroni constant
\begin{equation*}
\gamma = \lim_{n\to \infty} \left( \sum_{k=1}^n \frac{1}{k} - \log n \right).
\end{equation*}
Clearly, $f^\prime( 1/2 ) = 0$ and $f$ is strictly monotonically decreasing on $[0,1/2]$ and strictly monotonically increasing on $[1/2,1]$. Hence, $f(\alpha)$ has a unique minimum at $\alpha = 1/2$ in $[0,1]$. 

Asymptotic expansions for Gamma function ratios (cf. \cite[5.11.13, 5.11.15]{NIST}) applied to \eqref{eq:Id.b} yields
\begin{equation*}
I_d = \sqrt{2} \, \left( \frac{d}{2} \right)^{-\tfrac{1}{4}} \left( 1 - \frac{1}{16 d} + \mathcal{O}\Big( \frac{1}{d^2} \Big) \right) \left( \frac{d}{2} \right)^{\tfrac{1}{4}} \left( 1 - \frac{1}{16 d} + \mathcal{O}\Big( \frac{1}{d^2} \Big) \right) \qquad \text{as $d \to \infty$.}
\end{equation*}
The result follows. 

\paragraph{Lower and upper bound for $C_d \, I_d$:} These bounds follow from multiplying the corresponding bounds of $C_d$ and $I_d$. 

\paragraph{Lower and upper bound for $C_d \, I_d / ( 1 - C_d \, I_d )$:} These bounds follow from the bounds of $C_d \, I_d$ and simplification.
\end{proof}

Finally, we present the proof of \eqref{asym:Cxt}:
\begin{proof}[Proof of \eqref{asym:Cxt}]
Let $t \in (0,1)$ be fixed. By the Funk-Hecke formula \cite{M06} we get
\begin{equation*}
\sigma_d(C(\bm{x};t)) = \frac{\omega_{d-1}}{\omega_d} \int_t^1 (1-\tau^2)^{\frac{d}{2}-1} \mathrm{d} \tau.   
\end{equation*}
Here, $\omega_d$ denotes the surface area of the sphere $\mathbb{S}^d$. In particular
\begin{equation*}
\frac{\omega_{d-1}}{\omega_d} = \frac{\Gamma(\tfrac{d+1}{2})}{\sqrt{\pi}\  \Gamma(\frac{d}{2})} = d \, C_d. 
\end{equation*}
The change of variable $(1-t^2) \, u = 1 - \tau^2$, $0 \leq \tau \leq 1$, with $(1-t^2) \, \mathrm{d} u = - 2 \tau \, \mathrm{d} \tau$ yields
\begin{equation*}
\sigma_d(C(\bm{x};t)) = \frac{d}{2} \, C_d \left( 1 - t^2 \right)^{\frac{d}{2}} \int_0^1 u^{\frac{d}{2}-1} \left( 1 - ( 1 - t^2 ) \, u \right)^{-\frac{1}{2}} \mathrm{d} u. 
\end{equation*}
Integration by parts gives
\begin{equation*}
\sigma_d(C(\bm{x};t)) = C_d \, \frac{( 1 - t^2 )^{\frac{d}{2}}}{t} - \frac{1}{2} \, C_d \left( 1 - t^2 \right)^{\frac{d}{2}} \left( 1 - t^2 \right) \int_0^1 u^{\frac{d}{2}} \left( 1 - ( 1 - t^2 ) \, u \right)^{-\frac{3}{2}} \mathrm{d} u.
\end{equation*}
Since
\begin{equation*}
0 \leq \int_0^1 u^{\frac{d}{2}} \left( 1 - ( 1 - t^2 ) \, u \right)^{-\frac{3}{2}} \mathrm{d} u \leq \frac{1}{t^3} \int_0^1 u^{\frac{d}{2}} \, \mathrm{d} u = \frac{1}{\tfrac{d}{2}+1} \, \frac{1}{t^3} 
\end{equation*}
and using Lemma~\ref{lem:C.d.I.d.inequalities}, we arrive at the two-sided estimates
\begin{equation}
\label{eq:sigma.d.estimate}
\frac{\sqrt{1 - \tfrac{1}{d}}}{\sqrt{2\pi d}} \, \frac{( 1 - t^2 )^{\frac{d}{2}}}{t} \left( 1  - \frac{1}{d+2} \, \frac{1}{t^2} \right) 
<
\sigma_d(C(\bm{x};t))
<
\frac{1}{\sqrt{2\pi d}} \, \frac{( 1 - t^2 )^{\frac{d}{2}}}{t}
\end{equation}
or
\begin{equation*}
\sigma_d(C(\bm{x};t)) = C_d \, \frac{( 1 - t^2 )^{\frac{d}{2}}}{t} \left( 1 + \mathcal{O}\left( \frac{1}{d+2} \, \frac{1}{t^2} \right) \right) \qquad \text{as $d \to \infty$,}
\end{equation*}
where the hidden constant is, indeed, $1$. The result~\eqref{asym:Cxt} follows from the observation
\begin{equation*}
\left( 1 - t^2 \right)^{\frac{d}{2}} 
= 
\exp\Big( \frac{d}{2} \log( 1 - t^2 ) \Big) 
=
\exp\left( - \frac{d}{2} \left( t^2 + \frac{t^4}{2} + \cdots \right) \right)
\end{equation*}
when using the Taylor series expansion of $\log( 1 - u )$ for $u = t^2$ near $0$.
\end{proof}

\section{Proof of Eq.~\eqref{L2_lower_bound}}
\label{app:lower.bound}

\begin{proof}[Proof of Eq.~\eqref{L2_lower_bound}]
We apply the method in \cite{BB25}. According to \cite[Eq.~(5)]{BB25} we have the expansion 
\begin{equation*}
\frac{1}{\sqrt{2}} \| \bm{x} - \bm{y} \| 
=
1 + \sum_{\ell=1}^\infty (-1)^{\ell} \binom{\tfrac{1}{2}}{\ell} \langle \bm{x}, \bm{y} \rangle^\ell,
\end{equation*}
where the coefficients are negative for $\ell\in \mathbb{N}$ and can be given in terms of Pochhammer symbols ($(a)_0 := 1$ and $(a)_{n+1} = (a)_{n} \, (n+ a)$ for $n \in \mathbb{N}_0$); i.e.
\begin{equation*}
(-1)^{\ell} \binom{\tfrac{1}{2}}{\ell} = \frac{(-\frac{1}{2})_{\ell}}{\ell!} = \frac{1}{\Gamma( - \frac{1}{2} )} \, \frac{\Gamma( \ell - \frac{1}{2} )}{\Gamma( \ell + 1 )}. 
\end{equation*}
Substitution into Stolarsky's invariance principle \eqref{eq:Stolarsky.gen} taking into account \eqref{Kalpha} gives
\begin{eqnarray*}
\lefteqn{(L_2^{\cp,\alpha_d}(P_{N,d}))^2}  \\
& = & \frac{1}{N^2} \sum_{n,m=1}^N K_{\alpha_d,d}(\bm{x}_n, \bm{x}_m) - \int_{\mathbb{S}^d} \int_{\mathbb{S}^d} K_{\alpha_d,d}(\bm{x}, \bm{y}) \,\mathrm{d} \sigma_d(\bm{x}) \,\mathrm{d} \sigma_d(\bm{y}) \\
& = &
\frac{\sqrt{2} C_d}{\alpha_d} \sum_{\ell=1}^\infty \frac{-(-\frac{1}{2})_{\ell}}{\ell!} \left( \frac{1}{N^2} \sum_{m,n=1}^N \langle \bm{x}_m, \bm{x}_n \rangle^\ell - \int_{\mathbb{S}^d} \int_{\mathbb{S}^d} \langle \bm{x}, \bm{y} \rangle^\ell \,\mathrm{d} \sigma_d(\bm{x}) \,\mathrm{d} \sigma_d(\bm{y}) \right).
\end{eqnarray*}
The parenthetical expression are all non-negative by \cite[Lemma~1]{BB25}. Thus the series expansions adds up non-negative terms. By removing all odd terms and even terms with $\ell < 2M$, $M \in \mathbb{N}$ to be chosen later, we get 
\begin{eqnarray*}
\lefteqn{(L_2^{\cp,\alpha_d}(P_{N,d}))^2}\\
& \geq &
\frac{\sqrt{2} C_d}{\alpha_d} \sum_{r=M}^\infty \frac{-(-\frac{1}{2})_{2r}}{(2r)!} \left( \frac{1}{N^2} \sum_{m,n=1}^N \langle \bm{x}_m, \bm{x}_n \rangle^{2r} - \int_{\mathbb{S}^d} \int_{\mathbb{S}^d} \langle \bm{x}, \bm{y} \rangle^{2r} \,\mathrm{d} \sigma_d(\bm{x}) \,\mathrm{d} \sigma_d(\bm{y}) \right).
\end{eqnarray*}
By \cite[Lemma~2]{BB25} 
\begin{equation*}
\int_{\mathbb{S}^d} \int_{\mathbb{S}^d} \langle \bm{x}, \bm{y} \rangle^{2r} \,\mathrm{d} \sigma_d(\bm{x}) \,\mathrm{d} \sigma_d(\bm{y})  
=  
\frac{(\frac{1}{2})_{r}}{(\frac{d+1}{2})_{r}} 
=
\frac{\Gamma(\tfrac{d+1}{2})}{\Gamma(\tfrac{1}{2})} \, \frac{\Gamma(r+\tfrac{1}{2})}{\Gamma(r+\tfrac{d+1}{2})}
\sim
\frac{\Gamma( \frac{d+1}{2} )}{\Gamma( \frac{1}{2} )} \, \frac{1}{r^{\frac{d}{2}}} \quad \text{as $r \to \infty$.}    
\end{equation*}
Furthermore, the integrals are strictly monotonically decreasing in $r$ and the asymptotic term is a strict upper bound for all $r\in \mathbb{N}$. Using this and keeping only the diagonal terms in the double-sum we obtain  
\begin{equation*}
\frac{1}{N^2} \sum_{m,n=1}^N \langle \bm{x}_m, \bm{x}_n \rangle^{2r} - \int_{\mathbb{S}^d} \int_{\mathbb{S}^d} \langle \bm{x}, \bm{y} \rangle^{2r} \,\mathrm{d} \sigma_d(\bm{x}) \,\mathrm{d} \sigma_d(\bm{y}) 
\geq 
\frac{1}{N} - \frac{(\frac{1}{2})_{M}}{(\frac{d+1}{2})_{M}} 
\geq 
\frac{1}{N} - \frac{\Gamma( \frac{d+1}{2} )}{\Gamma( \frac{1}{2} )} \, \frac{1}{M^{d/2}}.
\end{equation*}
By \cite[Eq.~(25)]{BB25} we have
\begin{equation*}
\sum_{r=M}^\infty \frac{-(-\frac{1}{2})_{2r}}{(2r)!} 
\geq 
\frac{1}{2 \sqrt{2 \pi}} \, \frac{1}{M^{1/2}} \qquad \mbox{for all } M \in \mathbb{N},
\end{equation*}
where the right-hand side is indeed the leading term of the large $M$-asymptotics. Thus we get 
\begin{equation*}
(L_2^{\cp,\alpha_d}(P_{N,d}))^2  \geq 
\frac{\sqrt{2} C_d}{\alpha_d} \frac{1}{2 \sqrt{2 \pi}}  \left( \frac{1}{N} - \frac{\Gamma( \frac{d+1}{2} )}{\Gamma( \frac{1}{2} )} \, \frac{1}{M^{d/2}} \right) \frac{1}{M^{1/2}} = \frac{\sqrt{2} C_d}{\alpha_d} \frac{1}{2 \sqrt{2 \pi}} \, \frac{F( \frac{M^{d/2}}{N} )}{N^{1+\frac{1}{d}}} 
\end{equation*}
provided $M$ is such that the parenthetical expression is positive. The function 
\begin{equation*}
F(x) := \frac{x - \beta}{x^{1+\gamma}}, \qquad x \geq \beta := \frac{\Gamma( \frac{d+1}{2} )}{\Gamma( \frac{1}{2} )}, \qquad \gamma := \frac{1}{d}
\end{equation*}
has a unique maximum at $x^* = \beta ( 1 + \frac{1}{\gamma} ) > \beta$ with value 
\begin{equation} 
\label{eq:F.x.star}
F(x^*) = \frac{1}{( 1 + \gamma ) \big( \beta ( 1 + \frac{1}{\gamma} ) \big)^{\gamma}}=\frac{d}{(d+1)^{1+\frac{1}{d}}} \left(\frac{\Gamma(\frac{1}{2})}{\Gamma(\frac{d+1}{2})}\right)^{\frac{1}{d}}.
\end{equation}
This, together with the fact that $\sqrt{2}> I_d$, proves \eqref{L2_lower_bound}.
\end{proof}

\end{appendix}

\end{document}